\documentclass[10pt]{article}
\textwidth= 5.00in
\textheight= 7.4in
\topmargin = 30pt
\evensidemargin=0pt
\oddsidemargin=55pt
\headsep=17pt
\parskip=.5pt
\parindent=12pt

\usepackage{amssymb,latexsym,amsmath,epsfig,amsthm}

\makeatletter

\renewcommand\section{\@startsection {section}{1}{\z@}
{-30pt \@plus -1ex \@minus -.2ex}
{2.3ex \@plus.2ex}
{\normalfont\normalsize\bfseries}}

\renewcommand\subsection{\@startsection{subsection}{2}{\z@}
{-3.25ex\@plus -1ex \@minus -.2ex}
{1.5ex \@plus .2ex}
{\normalfont\normalsize\bfseries}}

\renewcommand{\@seccntformat}[1]{\csname the#1\endcsname. }

\makeatother

\newtheorem{theorem}{Theorem}
\newtheorem{lemma}{Lemma}

\newtheorem{corollary}{Corollary}

\begin{document}

\begin{center}
\uppercase{\bf Equality Classes of Nim Positions under Mis\`{e}re Play}
\vskip 20pt
{\bf Mark Spindler}\\
{\tt mark.edward.spindler@gmail.com}
\end{center}
\vskip 30pt

\centerline{\bf Abstract}

\noindent
We determine the mis\`{e}re equivalence classes of \textsc{Nim} positions
under two equivalence relations: one based on playing disjunctive
sums with other impartial games, and one allowing sums with partizan
games.

In the impartial context, the only identifications we can make are
those stemming from the known fact about adding a heap of size 1.
In the partizan context, distinct \textsc{Nim} positions are inequivalent.

\pagestyle{myheadings} 
\thispagestyle{empty} 
\baselineskip=12.875pt 
\vskip 30pt

\section{Introduction}

In the case of normal play, there are three different important equivalence
relations one may impose on a class of games including the \textsc{Nim} positions.
One may wish to know enough about a \textsc{Nim} position to play disjunctive
sums with other \textsc{Nim} positions, or with arbitrary impartial games,
or with arbitrary partizan games. However, in all three cases, the
required information is the same: the Grundy number, obtained by taking
the bitwise xor of the heap sizes.

In mis\`{e}re play, the Grundy number is almost enough to play a
sum of Nim heaps (two extra equivalence classes are needed to properly
handle sums of heaps of size one). However, this is not nearly enough
information to play with arbitrary impartial games, let alone partizan
ones. 

If we restrict the context to only impartial games, then it is widely
known that adding a heap of size $1$ is the same, up to equivalence, as changing the $2^0$ bit in the binary representation of an existing Nim heap. Plambeck noted in subsection 11.1 of
\cite{Plambeck} that distinct sums of
size-$2$ heaps are inequivalent. We show that the \emph{only}
possible identifications are those related to adding Nim heaps
of size $1$. This makes formal Siegel's assertion about simplifications of mis\`{e}re \textsc{Nim} in \cite{siegelminor}.

Furthermore, as noted in \cite{siegel}, a sum
of two heaps of size $1$, which is equivalent to $0$ in the context
of impartial games, becomes inequivalent to $0$ if we allow partizan games. In
this paper, we show that in the context of partizan games, the \textsc{Nim}
positions are all pairwise inequivalent. 

In summary, this paper confirms that in these overly broad contexts (or ``universes'' in the language of \cite{ends}),
essentially all of the information about a \textsc{Nim} position is required
to play it in an arbitrary disjunctive sum.

\section{General Notation}

Until Section 6, all games in this paper are assumed to be short. That is, they have finitely many distinct subpositions, and admit no infinite runs.

In this paper, almost every specific position we refer to will be a \textsc{Nim} position. As such, numerals in standard typeface (e.g. $3$) will always denote sizes of Nim heaps. As is usual, we will be using $+$ to denote disjunctive sums, so that, for example, $3+5$ is not equivalent to $8$ in any way under discussion.

Two (possibly partizan) games are said to be \textit{isomorphic} if their
game trees are isomorphic. For example, the Kayles position with two adjacent
pins is isomorphic to the \textsc{Nim} position $2$. The game $2+3+0$ is isomorphic
to $3+2$, but not to $2+2+1$. If $G$ is isomorphic to $H$, we
write $G\cong H$. 

As we would like to compare sums of games under mis\`{e}re play, we adopt notation from \cite{cgt}.

We use $o^{-}\left(G\right)$ to denote the mis\`{e}re outcome
of $G$, which can be either $\mathcal{L}$, $\mathcal{R}$, $\mathcal{P}$,
or $\mathcal{N}$, according to whether Left, Right, the Previous, or the Next player has a winning strategy. Then there is a natural partial order on outcomes, with $\mathcal{L}\ge\mathcal{P}\ge\mathcal{R}$, $\mathcal{L}\ge\mathcal{N}\ge\mathcal{R}$, and $\mathcal{P}$ incomparable to $\mathcal{N}$. We can use this to
order games:

\[
G\ge H\text{ if and only if }o^{-}\left(G+X\right)\ge o^{-}\left(H+X\right)\text{ for all games }X\text{.}
\]
Then we can define equality (partizan equivalence) for games: 
\[
G=H\text{ if and only if }G\ge H\text{ and }G\le H
\]
Equivalently, 

\[
G=H\text{ if and only if }o^{-}\left(G+X\right)=o^{-}\left(H+X\right)\text{ for all games }X\text{.}
\]
There is a corresponding equivalence relation, \textit{impartial equivalence},
for the impartial context:
\[
G\equiv H\text{ if and only if }o^{-}\left(G+X\right)=o^{-}\left(H+X\right)\text{ for all impartial games }X\text{.}
\]

\section{Simplification of Games}

\subsection{Impartial Context}

In Chapter V of \cite{cgt}, there are some theorems which yield a recursive test for impartial equivalence 
Using $G'$ to denote an arbitrary option of an impartial game $G$:
\begin{defn}
A game $G$ is said to be \textit{linked} to $H$ if \[o^{-}\left(G+T\right)=o^{-}\left(H+T\right)=\mathcal{P}\text{ for some impartial }T\text{.}\]

\end{defn}
\renewcommand{\labelenumi}{\roman{enumi}.}

Theorem V.3.6 of \cite{cgt} states:
\begin{lemma}
Given impartial games $G$ and $H$, $G\equiv H$ if and only if the following conditions hold\end{lemma}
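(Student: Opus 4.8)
The plan is to unwind the definition $G \equiv H \iff o^{-}(G+X) = o^{-}(H+X)$ for all impartial $X$, and to verify the two directions separately. Throughout I would exploit two features of the impartial misère setting: the conditions to be proved are symmetric under swapping $G$ and $H$, so it suffices to establish each implication up to this symmetry; and every impartial position has outcome $\mathcal{P}$ or $\mathcal{N}$, with a position $K$ being $\mathcal{N}$ precisely when some option of $K$ is a $\mathcal{P}$-position. Hence proving that $G+X$ and $H+X$ always share the same outcome reduces to showing $o^{-}(G+X) = \mathcal{N} \implies o^{-}(H+X) = \mathcal{N}$ for all $X$, with the reverse implication following by symmetry.

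For the forward direction, assume $G \equiv H$. Taking $X = 0$ immediately yields the outcome condition $o^{-}(G) = o^{-}(H)$. For the linkedness conditions, suppose toward a contradiction that some option $G'$ is linked to $H$, witnessed by an impartial $T$ with $o^{-}(G'+T) = o^{-}(H+T) = \mathcal{P}$. Then from $G+T$ the player to move may pass to the $\mathcal{P}$-position $G'+T$, forcing $o^{-}(G+T) = \mathcal{N}$; but $G \equiv H$ together with $o^{-}(H+T) = \mathcal{P}$ demands $o^{-}(G+T) = \mathcal{P}$, a contradiction. The symmetric argument rules out any option $H'$ being linked to $G$.

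For the reverse direction I would induct on the number of positions of $X$. Assuming the stated conditions and $o^{-}(G+X) = \mathcal{N}$, fix a winning move from $G+X$ to a $\mathcal{P}$-position. If this move is played in $X$, it reaches $G+X'$ with $o^{-}(G+X') = \mathcal{P}$; since $X'$ is smaller, the induction hypothesis gives $o^{-}(H+X') = \mathcal{P}$, so $H+X$ likewise moves to a $\mathcal{P}$-position and is $\mathcal{N}$. If instead the move is played in $G$, it reaches $G'+X$ with $o^{-}(G'+X) = \mathcal{P}$; were $o^{-}(H+X) = \mathcal{P}$ as well, then $X$ itself would witness $G'$ being linked to $H$, contradicting the linkedness condition, so again $o^{-}(H+X) = \mathcal{N}$. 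The base case $X = 0$ collapses to this second subcase and is handled by the same linkedness argument, in concert with the outcome condition. The main obstacle is precisely this second subcase: unlike normal play, misère play admits no clean reversibility allowing one to match the option $G'$ with a corresponding option of $H$, so the proof cannot proceed by comparing option sets directly. The linkedness hypothesis is exactly engineered to cover this gap, with the test game $X$ serving as the witness $T$; recognizing that the induction can be carried on $X$ alone, with $G$ and $H$ held fixed, is what makes the argument close.
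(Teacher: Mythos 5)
Your argument is correct in substance, but note that the paper offers no proof of this lemma at all: it is quoted directly as Theorem V.3.6 of Siegel's \emph{Combinatorial Game Theory}, so there is no in-paper argument to diverge from. What you have reconstructed is essentially the standard textbook proof: the forward direction by specializing $X$ and by observing that a linked option $G'$ would give the mover in $G+T$ an escape to a $\mathcal{P}$-position that equivalence with $H$ forbids, and the reverse direction by induction on $X$, splitting the winning move from $G+X$ according to which summand it lies in, with the linkedness hypothesis covering exactly the move made in the $G$ component. Two small points of care. First, your blanket claim that an impartial $K$ is an $\mathcal{N}$-position precisely when some option of $K$ is a $\mathcal{P}$-position fails for $K\cong 0$ under mis\`{e}re play ($0$ is $\mathcal{N}$ with no options at all); this is precisely why condition (iii) exists, and why the case $G\cong 0$, $X\cong 0$ must be handled on its own via that condition rather than ``collapsing to the second subcase,'' since there is then no winning move to fix. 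Second, in the inductive step you infer $o^{-}(H+X')=\mathcal{P}$ from $o^{-}(G+X')=\mathcal{P}$, which requires the induction hypothesis to be the full two-sided equality of outcomes for all smaller $X'$, not merely the single implication you reduced the inductive goal to; this is harmless since the conditions are symmetric in $G$ and $H$, but it should be stated explicitly.
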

\begin{enumerate}
\item[(i)] $G$ is linked to no $H'$.
\item[(ii)] $H$ is linked to no $G'$.
\item[(iii)] If $G\cong0$, then $o^{-}\left(H\right)=\mathcal{N}$ and vice versa.
\end{enumerate}
And Theorem V.3.5 of \cite{cgt} states: 
\begin{lemma}
Given impartial games $G$ and $H$, $G$ is linked to $H$ if and only if no option of $G$ is impartially equivalent to $H$ and no option of $H$ is is impartially equivalent to $G$.
\end{lemma}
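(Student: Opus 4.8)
The plan is to prove both directions of the biconditional, and to treat this lemma together with the preceding equivalence criterion as a single package established by simultaneous strong induction on the total number of positions $\left|G\right|+\left|H\right|$. This is legitimate because the right-hand side of the statement refers only to $\equiv$ between an option of one game and the other game, i.e. to pairs of strictly smaller total size, so the two criteria may invoke each other on smaller instances without circularity. Before starting I would record the misère outcome recursion for impartial games: a non-terminal position has outcome $\mathcal{P}$ exactly when all of its options have outcome $\mathcal{N}$, and it has outcome $\mathcal{N}$ exactly when it is terminal or has at least one option of outcome $\mathcal{P}$.

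\emph{Forward direction.} I would argue the contrapositive: if some option $G'$ satisfies $G'\equiv H$ (the case of an option $H'$ with $H'\equiv G$ being symmetric), then $G$ is not linked to $H$. Suppose for contradiction that some impartial $T$ witnesses the link, so $o^{-}\left(G+T\right)=o^{-}\left(H+T\right)=\mathcal{P}$. Since $G'+T$ is an option of the $\mathcal{P}$-position $G+T$, the outcome recursion forces $o^{-}\left(G'+T\right)=\mathcal{N}$. But $G'\equiv H$ specializes at $X=T$ to give $o^{-}\left(G'+T\right)=o^{-}\left(H+T\right)=\mathcal{P}$, a contradiction. Hence no such $T$ exists, and the forward implication follows.

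\emph{Backward direction.} Assuming no option of $G$ is $\equiv H$ and no option of $H$ is $\equiv G$, I must exhibit an impartial $T$ with $o^{-}\left(G+T\right)=o^{-}\left(H+T\right)=\mathcal{P}$. By the outcome recursion this reduces to choosing the options of $T$ so that every position reachable in a single move from $G+T$ or from $H+T$ is an $\mathcal{N}$-position: concretely, $o^{-}\left(G'+T\right)=\mathcal{N}$ for every option $G'$, $o^{-}\left(H'+T\right)=\mathcal{N}$ for every option $H'$, and $o^{-}\left(G+T'\right)=o^{-}\left(H+T'\right)=\mathcal{N}$ for every option $T'$ of the $T$ being built. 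The guiding idea is a response strategy for the previous player: $T$ should be constructed recursively so that it contains, for each option of $G$ and each option of $H$, a reply that returns the position to a configuration the inductive hypothesis already recognizes as winning, the non-equivalence hypotheses being precisely what guarantees such replies exist and land in the correct outcome class. One then verifies the complementary family of conditions on the options $T'$ of $T$ by the same recursion and the inductive form of both criteria on the smaller pairs that arise.

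I expect the backward direction to be the main obstacle, and specifically its simultaneity: the options of $T$ must be chosen to force all of the above outcomes at once for \emph{both} $G$ and $H$, and installing an option to repair one condition risks creating an unwanted $\mathcal{P}$-option and spoiling another, so the construction must be organized so that each installed reply is a $\mathcal{P}$- or $\mathcal{N}$-position for the right reason. Two further points demand care: confirming that the recursion is genuinely well-founded, so that every appeal to the equivalence criterion or to this lemma is on a pair of strictly smaller total size; and handling the degenerate cases governed by clause (iii) of the equivalence criterion, namely when $G$, $H$, or a relevant option is isomorphic to $0$, where the misère convention that a terminal position has outcome $\mathcal{N}$ must be applied directly rather than through the generic outcome recursion.
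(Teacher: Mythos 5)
First, note that the paper does not prove this lemma at all: it is quoted verbatim as Theorem V.3.5 of \cite{cgt} and used as a black box, so there is no internal proof to compare yours against. Judged on its own terms, your forward direction is complete and correct: if $G'\equiv H$ and some $T$ witnessed the link, then $G'+T$ would be an option of the $\mathcal{P}$-position $G+T$ and hence an $\mathcal{N}$-position, contradicting $o^{-}\left(G'+T\right)=o^{-}\left(H+T\right)=\mathcal{P}$. Your framing of this lemma and the recursive equivalence test as a single mutually recursive induction is also the right organizational choice, and matches how the source handles it.

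The backward direction, however, is a genuine gap. You correctly reduce the problem to choosing the options of $T$ so that $G'+T$, $H'+T$, $G+T'$, and $H+T'$ are all $\mathcal{N}$-positions, and you correctly identify the simultaneity problem, but you never actually define $T$ --- and that definition is the entire content of this direction. The hypotheses $G'\not\equiv H$ and $H'\not\equiv G$ only hand you, for each option, either a distinguishing game $X_{G'}$ with $o^{-}\left(G'+X_{G'}\right)\neq o^{-}\left(H+X_{G'}\right)$ or, via the inductive form of the recursive test, a disjunction of linkage conditions one level down. Converting these per-option certificates into a single game $T$ whose option set makes both $G+T$ and $H+T$ into $\mathcal{P}$-positions simultaneously --- while also ensuring that $G+T$ and $H+T$ are nonterminal, so that ``all options are $\mathcal{N}$'' actually yields outcome $\mathcal{P}$ under the mis\`{e}re convention --- is exactly the delicate construction in Siegel's proof, where the options of $T$ are assembled explicitly from those witnesses with special adjustments for the terminal cases. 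Saying that the construction ``must be organized so that each installed reply is a $\mathcal{P}$- or $\mathcal{N}$-position for the right reason'' names the obstacle without overcoming it, so as written the proposal establishes only one of the two implications.
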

Combining these two theorems yields the following:
\begin{theorem}
\label{thm:impartial test}Given impartial games $G$ and $H$, $G\equiv H$ if and only if the following conditions hold:  \end{theorem}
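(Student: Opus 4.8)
The plan is to derive this theorem purely by substitution: it is the result of feeding the second lemma above (the option-characterization of linking, Theorem V.3.5) into the three clauses of the first lemma (the equivalence test, Theorem V.3.6). Since both ingredients are quoted and may be assumed, no new combinatorial-game content is required; the work lies entirely in rewriting the clauses ``$G$ is linked to no $H'$'' and ``$H$ is linked to no $G'$'' into statements about options of options, and in checking that this rewriting is a correct logical manipulation. I expect the resulting conditions to read: (i) every option $H'$ of $H$ either is impartially equivalent to some option $G'$ of $G$, or itself has an option $H''$ with $H'' \equiv G$; (ii) the symmetric statement with the roles of $G$ and $H$ interchanged; and (iii) the unchanged base clause, that if $G \cong 0$ then $o^{-}(H) = \mathcal{N}$, and conversely.

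The key step is the logical negation. Fix an option $H'$ of $H$. By Theorem V.3.5, $G$ is linked to $H'$ exactly when no option of $G$ is $\equiv H'$ and no option of $H'$ is $\equiv G$. Applying De Morgan, $G$ is \emph{not} linked to $H'$ precisely when some option $G'$ of $G$ satisfies $G' \equiv H'$, or some option $H''$ of $H'$ satisfies $H'' \equiv G$. Clause (i) of Theorem V.3.6 demands that $G$ be linked to no option $H'$ of $H$, that is, that this disjunction hold for every such $H'$; this is exactly condition (i) above. Running the identical argument with $G$ and $H$ exchanged converts clause (ii) of the first lemma into condition (ii), and clause (iii) is carried over verbatim. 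Conjoining the three rewritten clauses and invoking Theorem V.3.6 then yields the stated equivalence in both directions.

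I do not anticipate a genuine obstacle, since the theorem is a bookkeeping consolidation of two results already in hand; the only points requiring care are getting the direction of the De Morgan negation right and tracking the quantifiers across the two layers of options (options $H'$ of $H$ together with their options $H''$). It is worth remarking, to justify calling the criterion a \emph{test}, that the resulting recursion is well-founded: conditions (i) and (ii) refer only to impartial equivalences between strictly simpler positions (an option against an option, or an option of an option against a whole game), while the base clause (iii) is decided by comparison with $0$ together with a single outcome computation. Because all games here are short, this descent terminates, so the criterion can in principle be evaluated in finitely many steps.
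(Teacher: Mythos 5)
Your proposal is correct and is exactly the paper's approach: the paper obtains this theorem by substituting the option-characterization of linking (Theorem V.3.5) into the clauses of Theorem V.3.6, which is precisely the De Morgan rewriting you carry out. Your additional remark on well-foundedness of the recursion is sound but not needed for the statement itself.
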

\begin{enumerate}
\item[(i)] For every $H'$, there is either an option $H''$ with $H''\equiv G$ or an option $G'$ with $G'\equiv H'$.
\item[(ii)] For every $G'$, there is either an option $G''$ with $G''\equiv H$ or an option $H'$ with $H'\equiv G'$.
\item[(iii)] If $G\cong0$, $o^{-}\left(H\right)=\mathcal{N}$ and vice versa.
\end{enumerate}

\subsection{Partizan Context}

In Section V.6 of \cite{cgt}
, there are partizan versions of the theorems above. By restricting them to the case
in which the games are impartial, we will obtain a theorem for partizan
equivalence analogous to Theorem \ref{thm:impartial test}. 
\begin{defn}
A game $G$ is said to be \textit{downlinked} to $H$ if, for some $T$,

$o^{-}\left(G+T\right)\le\mathcal{P}$ and $o^{-}\left(H+T\right)\ge\mathcal{P}$.
\end{defn}
After a slight rewording, Theorem V.6.16 of \cite{cgt} states:
\begin{lemma}
$G\ge H$ if and only if the following conditions hold:\end{lemma}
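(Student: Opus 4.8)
\medskip
\noindent\textbf{Proof proposal.}
This lemma is the promised rewording of Theorem~V.6.16 of \cite{cgt}, so the plan is not to reprove the comparison theorem but to transcribe it into the notation of Section~2 and check that the rewording is faithful. I expect four conditions: the two \emph{downlinking} conditions that $G$ is downlinked to no $H^{L}$ and that no $G^{R}$ is downlinked to $H$ (writing $H^{L}$ and $G^{R}$ for Left and Right options), together with the two \emph{end} conditions that if $H$ is a Left end then $G$ is a Left end, and if $G$ is a Right end then $H$ is a Right end. The bulk of the proof is then matching each condition against the corresponding clause of \cite{cgt}; to guard against a transcription error I would independently re-derive the necessity of each condition.

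For the downlinking conditions I would record the one-move obstruction they encode. Unwinding the outcome order, $o^{-}\left(A\right)\le\mathcal{P}$ says exactly that Left loses $A$ moving first, while $o^{-}\left(A\right)\ge\mathcal{P}$ says that Left wins $A$ moving second. So if $G$ is downlinked to some $H^{L}$ via a witness $T$, then in $H+T$ Left may open with $H\to H^{L}$, reaching $H^{L}+T$ with Right to move, and win; hence Left wins $H+T$ moving first, whereas $o^{-}\left(G+T\right)\le\mathcal{P}$ forces Left to lose $G+T$ moving first. Thus $o^{-}\left(G+T\right)\not\ge o^{-}\left(H+T\right)$ and $G\not\ge H$. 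The mirror-image argument, with Right opening in $G+T$ by $G\to G^{R}$, shows that any $G^{R}$ downlinked to $H$ likewise forces $G\not\ge H$. This gives the necessity of both downlinking conditions; their sufficiency, in concert with the end conditions, is what the cited theorem supplies.

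The part demanding care, and the one I expect to be the main obstacle, is the transcription of the end conditions, since misère play is sensitive to which player has run out of moves and the two clauses point in opposite directions. I would pin down their directions through the extremal failures: with $H=0$ (a Left end) and $G$ the partizan game whose only move is a Left move to $0$ (a Right end, but not a Left end), both downlinking conditions and the Right-end clause hold, yet a short outcome computation gives $o^{-}\left(G\right)=\mathcal{R}\not\ge\mathcal{N}=o^{-}\left(0\right)$, so $G\not\ge H$; this isolates the clause ``if $H$ is a Left end then $G$ is a Left end,'' and the reflected example settles its dual. The remaining work is then purely bookkeeping---confirming that each clause appears, with its implication oriented correctly and no outcome qualifier dropped. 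Finally, since the lemma is wanted only en route to the impartial specialization, in which Left and Right options coincide and every end is isomorphic to $0$, I would verify that the four conditions collapse to the expected impartial form there, in parallel with Theorem~\ref{thm:impartial test}.
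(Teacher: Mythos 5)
Your proposal matches the paper's treatment: the paper offers no proof of this lemma at all, presenting it purely as a reworded citation of Theorem~V.6.16 of \cite{cgt}, and you likewise defer the substantive (sufficiency) direction to that citation while transcribing the same four conditions. Your supplementary checks are correct --- the necessity arguments for the two downlinking clauses and the example $G=\left\{0\mid\cdot\right\}$, $H=0$ with $o^{-}(G)=\mathcal{R}\not\ge\mathcal{N}=o^{-}(H)$ isolating the Left-end clause are all sound --- but they go beyond what the paper records rather than diverging from it.
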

\begin{enumerate}
\item[(i)] $G$ is downlinked to no $H^{L}$;
\item[(ii)] No $G^{R}$ is downlinked to $H$;
\item[(iii)] If $H$ has no Left options, then neither does $G$;
\item[(iv)] If $G$ has no Right options, then neither does $H$.
\end{enumerate}
Theorem V.6.15 of \cite{cgt} states: 
\begin{lemma}
$G$ is downlinked to $H$ if and only if no $G^{L}\ge H$ and $G\ge$no $H^{R}$.
\end{lemma}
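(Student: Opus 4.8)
The plan is to prove the two implications of the biconditional separately, after first translating the outcome conditions into statements about who wins moving first. The key observations are that $o^{-}(A)\le\mathcal{P}$ holds exactly when Left, moving first in $A$, loses, and $o^{-}(A)\ge\mathcal{P}$ holds exactly when Right, moving first in $A$, loses. Thus ``$G$ is downlinked to $H$'' asserts the existence of a single test game $T$ for which Left-to-move loses $G+T$ while Right-to-move loses $H+T$, and the claim rewrites the existence of such a $T$ in terms of the option inequalities $G^{L}\not\ge H$ and $G\not\ge H^{R}$.

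For the forward direction I would argue by contraposition using a one-move analysis. Suppose first that $G^{L}\ge H$ for some Left option, and suppose toward a contradiction that $T$ witnesses a downlink, so that $o^{-}(G+T)\le\mathcal{P}$ and $o^{-}(H+T)\ge\mathcal{P}$. By the definition of $\ge$ applied with $X=T$ we get $o^{-}(G^{L}+T)\ge o^{-}(H+T)\ge\mathcal{P}$; hence Left, moving first in $G+T$, may play $G\to G^{L}$ to reach a position that Right-to-move loses, contradicting $o^{-}(G+T)\le\mathcal{P}$. The case in which $G\ge H^{R}$ is symmetric: from $o^{-}(G+T)\le\mathcal{P}$ we obtain $o^{-}(H^{R}+T)\le o^{-}(G+T)\le\mathcal{P}$, so Right, moving first in $H+T$, plays $H\to H^{R}$ and wins, contradicting $o^{-}(H+T)\ge\mathcal{P}$. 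This shows that any downlink forces every $G^{L}\not\ge H$ and every $H^{R}\not\le G$.

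The substantive direction is the converse: assuming no $G^{L}\ge H$ and no $H^{R}\le G$, I must actually produce a test game $T$ that simultaneously makes Left-to-move lose $G+T$ and Right-to-move lose $H+T$. I would construct $T$ by induction on the combined ranks of $G$ and $H$. The failures of the inequalities furnish refuting data: each $G^{L}\not\ge H$ is witnessed by some game, and each $G\not\ge H^{R}$ by another, and the aim is to assemble these local refutations into one $T$ whose Left options answer every Left move available in $G+T$ and whose Right options answer every Right move available in $H+T$. Concretely, one wants $T$ arranged so that after any Left move in $G+T$ the resulting position is a Right-to-move win and after any Right move in $H+T$ the resulting position is a Left-to-move win, with the induction hypothesis controlling the sub-sums that involve the smaller games $G^{L}$, $H^{R}$, $T^{L}$, and $T^{R}$; the boundary situations in which a player has no move are dispatched directly by the misère convention.

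I expect this assembly to be the main obstacle. The difficulty is that the two required conditions live in different sums, $G+T$ and $H+T$, yet must be met by one shared $T$, so the witnesses refuting the various option inequalities cannot simply be summed together: a naive disjunctive sum of refuting games need not preserve the one-sided outcomes one needs. Arranging the options of $T$ so that those tailored to the $G$-side do not spoil the $H$-side behaviour, and conversely, is exactly where the induction must be set up with care, and is the step I anticipate will absorb most of the effort. Once a suitable $T$ is in hand, confirming that $o^{-}(G+T)\le\mathcal{P}$ and $o^{-}(H+T)\ge\mathcal{P}$ is a routine check against the move analysis recorded above.
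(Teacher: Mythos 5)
The paper does not prove this lemma at all; it is quoted verbatim as Theorem V.6.15 of Siegel's \emph{Combinatorial Game Theory}, so the only basis for comparison is the proof in that reference. Measured against it, your forward direction is correct and is essentially the standard argument: if $G^{L}\ge H$ and $T$ witnessed a downlink, then $o^{-}(G^{L}+T)\ge o^{-}(H+T)\ge\mathcal{P}$ hands Left a winning first move in $G+T$, contradicting $o^{-}(G+T)\le\mathcal{P}$, and the $H^{R}$ case is dual. That half is fine.

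The converse, however, is not a proof but an announcement of one, and the step you defer is the entire content of the lemma. Saying that each failed inequality ``is witnessed by some game'' and that these witnesses should be ``assembled'' into a single $T$ does not identify where the options of $T$ come from, and the naive reading fails: a witness $X$ for $G^{L}\not\ge H$ only certifies that $o^{-}(G^{L}+X)\not\ge o^{-}(H+X)$, which is an outcome \emph{discrepancy}, not the one-sided outcome $o^{-}(G^{L}+T)\not\ge\mathcal{P}$ you need after Left moves in $G+T$; nor is it clear that the same $T$ can simultaneously leave $o^{-}(H+T)\ge\mathcal{P}$. In Siegel's proof the witness $T$ is an explicitly constructed game whose Left and Right options are built from the subpositions of $G$ and $H$ together with carefully chosen adjoined positions, arranged (by induction, in tandem with the companion characterization of $\ge$ that the paper labels as Theorem V.6.16) so that every Left move in $G+T$ and every Right move in $H+T$ has a prescribed reply, with the mis\`{e}re ending convention handled at the leaves. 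You correctly diagnose that this is where the effort lies, but diagnosing the obstacle is not the same as overcoming it; as written, the ``if'' direction is a gap.
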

Combining this with the previous theorem, we have a general recursive
test for $\ge$ as follows:
\begin{theorem}
$G\ge H$ if and only if the following conditions hold:\end{theorem}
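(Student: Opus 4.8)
The plan is to mimic exactly the derivation of Theorem~\ref{thm:impartial test} from its two constituent lemmas, now in the partizan setting. The two ingredients are the characterization of $\ge$ in terms of the downlink relation (the first lemma above) and the characterization of downlinking purely in terms of $\ge$ among options (the second). The key point is that the downlink characterization expresses ``$A$ is downlinked to $B$'' \emph{without} any reference to an auxiliary game $T$; so substituting it into the $\ge$-characterization eliminates every occurrence of the downlink relation, and hence the implicit existential quantifier over $T$, leaving a test phrased entirely in terms of $\ge$ applied to options of $G$ and $H$. This is precisely the recursive form we want, and the equivalence will then follow immediately, with no further argument, from the two cited lemmas.

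Concretely, I would proceed as follows. Conditions (iii) and (iv) of the $\ge$-characterization refer only to the presence or absence of Left or Right options, so they transcribe into the combined theorem verbatim. The substantive work is in conditions (i) and (ii). For (i), I apply the downlink characterization to the pair $(G, H^{L})$: here $G$ is downlinked to $H^{L}$ exactly when no $G^{L}\ge H^{L}$ and $G\ge$ no $H^{LR}$. Since (i) demands that $G$ be downlinked to \emph{no} $H^{L}$, I negate this conjunction and quantify universally, obtaining: for every $H^{L}$, either some $G^{L}\ge H^{L}$ or $G\ge$ some $H^{LR}$. Symmetrically, for (ii) I apply the downlink characterization to the pair $(G^{R}, H)$, where $G^{R}$ is downlinked to $H$ exactly when no $G^{RL}\ge H$ and $G^{R}\ge$ no $H^{R}$; negating and quantifying gives: for every $G^{R}$, either some $G^{RL}\ge H$ or $G^{R}\ge$ some $H^{R}$. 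Together with (iii) and (iv), these are the four conditions of the theorem.

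The bookkeeping that requires the most care — and where an error would most easily creep in — is the quantifier and De Morgan manipulation. Condition (i) of the first lemma is a universally quantified negation (``downlinked to \emph{no} $H^{L}$''), while the downlink relation is itself a conjunction of two ``no $\cdots\ge\cdots$'' clauses; so one must correctly turn ``not (no $A\ge B$)'' into ``some $A\ge B$'' and distribute the outer negation across the conjunction to produce the disjunction. One must also track which generation of option is meant: the right options $H^{LR}$ of the left options of $H$, and the left options $G^{RL}$ of the right options of $G$. Beyond this, I would note briefly that each recursive clause compares strictly simpler positions (options, or options of options, of $G$ and $H$), so for short games the test is well-founded and genuinely terminates. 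Once the substitutions are carried out with this care, the stated equivalence is an immediate consequence of the two lemmas, exactly as in the impartial case.
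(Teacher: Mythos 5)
Your proposal is correct and takes essentially the same route as the paper, which obtains this theorem simply by substituting the option-level characterization of downlinking into the downlink-based characterization of $\ge$ and negating; your quantifier and De Morgan bookkeeping (including the identification of the $H^{LR}$ and $G^{RL}$ generations of options) matches the stated conditions exactly.
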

\begin{enumerate}
\item[(i)] For every $H^{L}$, there is either an option $H^{LR}$ with $H^{LR}\le G$ or an option $G^{L}$ with $G^{L}\ge H^{L}$.
\item[(ii)] For every $G^{R}$, there is either an option $G^{RL}$ with $G^{RL}\ge H$ or an option $H^{R}$ with $H^{R}\le G^{R}$.
\item[(iii)] If $H$ has no Left options, then neither does $G$.
\item[(iv)] If $G$ has no Right options, then neither does $H$.
\end{enumerate}
If we restrict the previous theorem to the case in which $G$ and
$H$ are impartial, it simplifies considerably:
\begin{theorem}
\label{thm:partizantest}If $G$ and $H$ are impartial, then $G=H$
if and only if the following conditions hold:\end{theorem}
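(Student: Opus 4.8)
The plan is to derive this theorem directly from the general recursive test for $\ge$ just stated, exploiting two special features of impartial games. The first and most important feature is that for impartial games the relation $\ge$ is symmetric, so that verifying $G=H$ reduces to the single condition $G\ge H$. The second is that for an impartial game every Left option is also a Right option and vice versa, so the four-fold case analysis of the general test collapses.

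To establish the symmetry, I would first recall that negation corresponds to swapping the roles of the two players, so that $o^{-}\left(-G\right)$ is obtained from $o^{-}\left(G\right)$ by interchanging $\mathcal{L}$ and $\mathcal{R}$ while fixing $\mathcal{P}$ and $\mathcal{N}$; since this interchange reverses the outcome order, one obtains the standard fact $G\ge H \iff -H\ge -G$ by substituting $-X$ for $X$ in the definition of $\ge$. For an impartial game, however, the option sets seen by the two players coincide, so $-G\cong G$ and hence $-G=G$. Applying this to both $G$ and $H$ turns $G\ge H \iff -H\ge -G$ into $G\ge H \iff H\ge G$, which is the desired symmetry. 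Because options of impartial games are again impartial, the same symmetry applies at every level of the recursion, so every instance of $\le$ appearing in the general test may be rewritten as $\ge$, and each comparison of two impartial positions is simply partizan equality.

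With these observations in hand, I would specialize the four conditions of the general test. Writing $G'$ for a generic option of $G$ (now serving as both a Left and a Right option) and $G''$ for an option of $G'$, conditions (i) and (ii) of the general test become symmetric statements: for every option of one game, either some second-level option of that game equals the other game, or some option of the other game equals the given option. Conditions (iii) and (iv), which concern the absence of Left options and of Right options respectively, both reduce to statements about having no options at all, i.e., being isomorphic to $0$, and together they assert that $G\cong0$ if and only if $H\cong0$. The result is exactly the partizan analogue of Theorem \ref{thm:impartial test}, with $\equiv$ replaced by $=$.

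The main obstacle I anticipate is justifying the symmetry $G\ge H \iff H\ge G$ cleanly, and in particular confirming that negation conjugates mis\`ere outcomes and reverses their order. Once that is secured, the remainder is a routine translation of the general test into impartial notation, with care taken to note that the second-level options $G''$ and $H''$ play the role of the $G^{RL}$ and $H^{LR}$ terms, and that every surviving comparison is between impartial games and hence is genuine equality.
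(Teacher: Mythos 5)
Your proposal is correct and takes essentially the same route as the paper: the paper's proof is a one-line remark that, when $G$ and $H$ are impartial, all inequalities in the general recursive test collapse to $=$ and "no Left/Right options" means $\cong 0$, and your negation/symmetry argument is precisely the justification for that collapse. The only difference is that you spell out why $\ge$ is symmetric on impartial games, which the paper leaves implicit.
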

\begin{enumerate}
\item[(i)] For every $H'$, there is either an option $H''$ with $H''= G$ or an option $G'$ with $G'= H'$.
\item[(ii)] For every $G'$, there is either an option $G''$ with $G''= H$ or an option $H'$ with $H'= G'$.
\item[(iii)] If $G\cong0$, $H\cong0$ and vice versa.
\end{enumerate}
\begin{proof}
In this case, all of the inequalities reduce to $=$ and 
having no Left/Right options means a game is isomorphic to $0$. 
\end{proof}

\section{Partizan Equivalence Classes}

\subsection{Organizing the Nim Positions}

Every \textsc{Nim} position is a disjunctive sum of Nim heaps of various sizes.
Note that, for all games $G$, we have $G+0\cong G$. Also, permuting
the order of a sum of Nim heaps yields an isomorphic game. Therefore,
every \textsc{Nim} position is determined up to isomorphism by the multiset of nonzero heap sizes.
As such, we can identify a \textsc{Nim} position (up to isomorphism) with a
finite nondecreasing sequence of positive integers: $4+1+0+1$ is
identified with $\left(1,1,4\right)$, and $0+0+0$ is identified
with the sequence of length $0$, written $\left(\cdot\right)$ for
clarity.
\begin{defn}
Given two finite sequences of natural numbers $A$ and $B$, we say $A$ \textit{precedes} $B$ \textit{in quasi-lexicographic order}, and write $A\prec B$ if either $A$ is shorter than $B$, or they have the same length and $A$ precedes $B$ lexicographically (see example 5.1 in \cite{logic}). For example, $\left(2,9\right)\prec\left(2,2,2,4,5\right)\prec\left(2,2,4,4,5\right)$.
This is sometimes called \emph{radix} or \emph{shortlex} order. 
\end{defn}
Note that quasi-lexicographic order is a well-order on the set of all finite sequences. We will use this fact as the basis for several induction proofs to follow.
\begin{defn}
\label{orderdef}Given \textsc{Nim} positions $G$ and $H$, we say $G$ \textit{precedes} $H$, and write $G\prec H$, if the corresponding sequence
for $G$ precedes the one for $H$ in quasi-lexicographic order. \end{defn}
\begin{lemma}
\label{lem:options prec}If $G'$ is an option of a \textsc{Nim} position $G$, then $G'\prec G$.\end{lemma}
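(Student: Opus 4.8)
The plan is to unwind the definition of a \textsc{Nim} option in terms of the sequence representation and then split into the two clauses of the well-order (shorter length, or equal length with lexicographic precedence). Write $G$ as the nondecreasing sequence $\left(a_{1},\ldots,a_{k}\right)$ of its nonzero heap sizes. A single move in \textsc{Nim} consists of choosing one heap and removing at least one token from it; equivalently, it replaces one entry $a_{i}=v$ by some value $m$ with $0\le m<v$, after which the multiset of nonzero heaps is re-sorted to give the sequence for $G'$.

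First I would dispose of the case $m=0$. Here the chosen heap vanishes, so $G'$ has exactly $k-1$ nonzero heaps. Since $G'$ is strictly shorter than $G$, the first clause in the definition of quasi-lexicographic order gives $G'\prec G$ immediately.

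The remaining case is $m>0$, where $G'$ again has length $k$ and we must compare the two nondecreasing sequences lexicographically. The key observation is a monotonicity property of order statistics: if one forms a new multiset from $\left(a_{1},\ldots,a_{k}\right)$ by decreasing a single entry, then the $i$-th smallest value cannot increase, so the sorted sequence for $G'$ is entrywise $\le$ the sorted sequence for $G$. Because $m\ne v$ the two multisets differ, so the sorted sequences differ in at least one coordinate; at the first such coordinate the entry for $G'$ is strictly smaller (it cannot be larger, by the entrywise inequality). This is exactly lexicographic precedence, so $G'\prec G$.

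The only real subtlety --- and the step I would be most careful about --- is the re-sorting in the second case: one must compare the canonical nondecreasing representatives rather than the sequences \emph{in place}, since reducing $a_{i}$ may move it past smaller entries. Invoking the order-statistic monotonicity handles this cleanly and sidesteps a fiddly bookkeeping argument tracking exactly where the reduced heap lands. Everything else is routine.
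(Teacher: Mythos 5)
Your proof is correct and follows essentially the same route as the paper's: split into the case where a heap is removed entirely (shorter sequence) and the case where it is merely reduced (same length, compare lexicographically after re-sorting). The only difference is in the second case, where the paper explicitly locates the first index of disagreement (the position of the last heap equal to the new size $b$), while you derive lexicographic precedence from the entrywise monotonicity of order statistics under decreasing one element --- a slightly cleaner justification of the same step, and your monotonicity claim is indeed valid.
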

\begin{proof}
Let $G'$ be an arbitrary option of $G$. If the move to $G'$ involved
removing an entire heap, then $G'\prec G$ simply because the sequence
of heap sizes became shorter. Now suppose instead that the original
position is $G=\left(a_{1},\ldots,a_{N}\right)$, and the $j$th
heap has some, but not all, stones removed, with $b>0$ stones left
in that heap in $G'$. When the heap sizes of $G'$ are put in increasing
order, all the heap sizes of $G'$ agree with those of $G$ until
we reach the last heap with size equal to $b$. In that position,
$G$ necessarily has a higher heap size since $G'$ has more heaps
of size $b$ than $G$ does. Thus, by lexicographic ordering, $G'\prec G$
in this case as well. \end{proof}
\begin{lemma}
\label{lem:least option}Given a nonzero \textsc{Nim} position $\left(a_{1},a_{2},\ldots,a_{N}\right)$,
its $\prec$-least options are isomorphic to $\left(a_{1},\ldots,a_{N-1}\right)$.\end{lemma}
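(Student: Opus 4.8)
The plan is to exploit the two-tiered nature of quasi-lexicographic order, in which length dominates and lexicographic comparison only breaks ties among sequences of equal length. Write $G=(a_1,\ldots,a_N)$ with $a_1\le\cdots\le a_N$; since $G$ is nonzero, $N\ge 1$. First I would sort the options of $G$ into two families. A move removing an entire heap yields a sequence of length $N-1$, whereas a move removing some but not all stones from a single heap leaves a sequence of length $N$, since it merely lowers one entry. By the definition of $\prec$, every length-$(N-1)$ sequence precedes every length-$N$ sequence, so the $\prec$-least options are necessarily found among the full-heap removals. At least one such removal exists (remove the $N$th heap), so the minimum is genuinely attained there.

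The main work is then to identify the lexicographically smallest full-heap removal. Removing heap $j$ produces the sorted sequence $(a_1,\ldots,a_{j-1},a_{j+1},\ldots,a_N)$, obtained from $G$ by deleting $a_j$ and closing the gap. I would compare this, position by position, against the candidate $(a_1,\ldots,a_{N-1})$, which is exactly what removing any maximum-size heap produces. The first $j-1$ entries agree, and for each position $i$ with $j\le i\le N-1$ the removal-$j$ sequence carries $a_{i+1}$ while the candidate carries $a_i$; monotonicity gives $a_i\le a_{i+1}$ throughout, so the candidate is entrywise $\le$ every other full-heap removal and hence $\prec$-least. Moreover the comparison is strict exactly when $a_j<a_N$: in that case some index $i\in\{j,\ldots,N-1\}$ satisfies $a_i<a_{i+1}$, forcing the candidate strictly below at the first such coordinate. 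When $a_j=a_N$ the two sorted sequences coincide, so removing any maximum-size heap yields a position isomorphic to $(a_1,\ldots,a_{N-1})$. Together these show that the $\prec$-least options are precisely those isomorphic to $(a_1,\ldots,a_{N-1})$.

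I expect the index bookkeeping in the positionwise comparison to be the only delicate point, since deleting an interior entry $a_j$ shifts the entire tail and one must track how the shifted value $a_{i+1}$ lines up against $a_i$. Everything else is a direct unwinding of the definition of $\prec$ and of a Nim option, together with the fact that the heap sizes are listed in nondecreasing order.
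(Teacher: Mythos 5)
Your proof is correct and follows essentially the same route as the paper's: length-domination of the quasi-lexicographic order reduces the problem to full-heap removals, and a positionwise comparison of the sorted sequence after removing heap $j$ against $\left(a_{1},\ldots,a_{N-1}\right)$, using monotonicity of the heap sizes, shows the latter is $\prec$-least. Your entrywise argument and explicit characterization of when the comparison is strict are a mild elaboration of the paper's ``first index where they differ'' step, but the underlying idea is identical.
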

\begin{proof}
Since the quasi-lexicographic order prioritizes length, the $\prec$-least options are the ones where an entire heap is removed. Suppose that $a_i$ is removed, leaving the sequence $\left(b_{1},\ldots,b_{N-1}\right)$. If $a_{i}=a_{N}$, then this is isomorphic to $\left(a_{1},\ldots,a_{N-1}\right)$. Otherwise, let $j$ be the first index at which $\left(b_{1},\ldots,b_{N-1}\right)$ and $\left(a_{1},\ldots,a_{N-1}\right)$ differ, noting that $j\ge i$. Then we have $b_{j}=a_{j+1}>a_{j}$ so that $\left(a_{1},\ldots,a_{N-1}\right)\prec\left(b_{1},\ldots,b_{N-1}\right)$.
\end{proof}

\subsection{Finding Equivalence Classes}
\begin{lemma}
\label{lem:part ineq}Nonzero \textsc{Nim} positions are not equivalent to
any preceding position.\end{lemma}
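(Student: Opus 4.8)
The plan is to induct on $G$ with respect to the well-order $\prec$ of Definition \ref{orderdef}, applying the recursive criterion of Theorem \ref{thm:partizantest}. The inductive hypothesis is precisely that every nonzero position below $G$ is inequivalent to all of its predecessors, and I would first record its key consequence: among positions strictly preceding $G$, partizan equivalence coincides with isomorphism. Indeed, if $X=Y$ with $X,Y\prec G$ distinct, then (since $0$ is $\prec$-least) the $\prec$-larger of the two is nonzero and equivalent to a predecessor, contradicting the hypothesis. Because all options of $G$ and of any $H\prec G$ lie below $G$ by Lemma \ref{lem:options prec}, this lets me replace ``$=$'' by ``$\cong$'' throughout every comparison of options when I invoke Theorem \ref{thm:partizantest}.

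Next I would suppose, toward a contradiction, that $G=H$ for some $H\prec G$, and crucially choose $H$ to be the $\prec$-least position equivalent to $G$. Arranging this minimality is the step I expect to be the main obstacle: condition (i) of Theorem \ref{thm:partizantest} contains the clause ``some option $H''$ of $H'$ satisfies $H''=G$,'' which I cannot discharge directly without assuming the very statement I am proving for $G$. But any such $H''$ is a position $\prec H$ (Lemma \ref{lem:options prec}) equivalent to $G$, contradicting minimality; so that clause is vacuous. Condition (i) therefore collapses to the assertion that every option of $H$ is isomorphic to an option of $G$. Condition (ii), whose analogous clause compares options of options of $G$ with $H$ — all of which lie below $G$ — collapses by the coincidence above to: for every option $G'$ of $G$, either $H$ is isomorphic to an option of $G'$, or $G'$ is isomorphic to an option of $H$. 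Condition (iii) merely records that $H$ is nonzero.

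I would finish with a length count, writing $G=(a_1,\dots,a_N)$ and $H=(h_1,\dots,h_M)$ with $M\le N$ since $H\prec G$. If $M<N$, then deleting a heap from the nonzero position $H$ produces an option of length $M-1<N-1$, which is too short to be isomorphic to any option of $G$; this violates the reduced condition (i). If $M=N$ (so $H$ is lexicographically below $G$), I apply the reduced condition (ii) to the $\prec$-least option $G'=(a_1,\dots,a_{N-1})$ of $G$ (Lemma \ref{lem:least option}). Its options have length at most $N-1$, so $H$ cannot be among them, and condition (ii) forces $(a_1,\dots,a_{N-1})$ to be an option of $H$; hence $H$ has heap multiset $\{a_1,\dots,a_{N-1},t\}$ for some $t$, and $H\prec G$ forces $t<a_N$, which exhibits $H$ itself as the option of $G$ obtained by reducing the largest heap to $t$. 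But then the reduced condition (ii) applied to $G'=H$ fails, since neither disjunct can hold: no game is isomorphic to one of its own options, as options strictly precede their parent. Either case contradicts $G=H$, completing the induction.
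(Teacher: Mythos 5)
Your proof is correct, and while it shares the paper's skeleton---induction on the shortlex well-order together with the recursive test of Theorem \ref{thm:partizantest} and Lemmas \ref{lem:options prec} and \ref{lem:least option}---the two delicate steps are handled by genuinely different means. First, to neutralize the clause ``some $H''$ with $H''=G$'' in condition (i), you take $H$ to be the $\prec$-least position equivalent to $G$; the paper instead applies condition (i) only to the $\prec$-least option $\left(b_{1},\ldots,b_{M-1}\right)$ of $H$ and derives the contradiction by transitivity, since $G=H$ would make $H$ equivalent to an option of one of its own options, violating the inductive hypothesis applied to $H$. Second, and more substantially, the paper disposes of the case in which $H$ is an option of $G$ (same length, agreeing in the first $N-1$ heaps) by an appeal to the known solution of mis\`{e}re \textsc{Nim} (Theorem V.1.1 of \cite{cgt}): any move changes the Nim-sum, so $G$ can be separated from each of its options by adding a suitable \textsc{Nim} position. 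Your argument needs no such external input: having observed that the inductive hypothesis makes equivalence coincide with isomorphism strictly below $G$, you show that condition (ii) applied at the option $G'\cong H$ fails outright, because no position is isomorphic to one of its own options. Each approach has its advantages: yours is self-contained relative to the recursive test, at the price of the explicit length bookkeeping in the final dichotomy, while the paper's is shorter where it can lean on the classical mis\`{e}re \textsc{Nim} strategy (which it must cite elsewhere in any case).
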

\begin{proof}
We use induction. Let $G$ be a nonzero \textsc{Nim} position, with $G\cong\left(a_{1},\ldots,a_{N}\right)$. Assume that the claim is true for all positions preceding $G$. Note that by the cancellation property of Nim-sum (bitwise xor), and the strategy for mis\`{e}re \textsc{Nim}, $G$ can never have the same outcome as any of its options when added to arbitrary \textsc{Nim} positions (see Theorem V.1.1 of \cite{cgt})
, let alone all partizan games. In particular, $G$ is not equivalent to any positions of the same length beginning with $\left(a_{1},\ldots,a_{N-1}\right)$. 

Let $H$, a nonzero \textsc{Nim} position preceding $G$, be given. Denote the sequence corresponding to $H$ by $\left(b_{1},\ldots,b_{M}\right)$, and suppose $\left(b_{1},\ldots,b_{M-1}\right)\ncong\left(a_{1},\ldots,a_{N-1}\right)$. 
By the definition of the ordering, we must have $\left(b_{1},\ldots,b_{M-1}\right)\prec\left(a_{1},\ldots,a_{N-1}\right)$.
Then note that $\left(a_{1},\ldots,a_{N-1}\right)$ is the $\prec$-least option of $G$ (by Lemma \ref{lem:least option}) and no option of $G$ could be equivalent to $\left(b_{1},\ldots,b_{M-1}\right)$ by the induction hypothesis and Lemma \ref{lem:options prec}. 
Assume, for sake of contradiction that $G=H$. Then by condition (i) of Theorem \ref{thm:partizantest}, $G$ is equivalent to an option of $\left(b_{1},\ldots,b_{M-1}\right)$. But then, since equivalence is transitive, $H$ would be equivalent to an option of $\left(b_{1},\ldots,b_{M-1}\right)$. This is impossible by the induction hypothesis and Lemma \ref{lem:options prec}.

It remains to show that $G\neq 0$. But $G=0$ would imply $G\cong0$, by condition (iii) of theorem \ref{thm:partizantest}. 
\end{proof}
Since $\prec$ is a total order on the \textsc{Nim} positions (up to isomorphism),
we have:
\begin{theorem}
Equivalent \textsc{Nim} positions are isomorphic.
\end{theorem}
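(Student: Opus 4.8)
The plan is to derive this statement as an immediate consequence of Lemma~\ref{lem:part ineq} together with the already-noted fact that $\prec$ totally orders the \textsc{Nim} positions up to isomorphism. Suppose $G$ and $H$ are equivalent \textsc{Nim} positions, and suppose toward a contradiction that $G\not\cong H$. Since $\prec$ is a total order on the \textsc{Nim} positions (up to isomorphism), exactly one of $G\prec H$ or $H\prec G$ holds; without loss of generality, assume $G\prec H$.

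The key observation is that the $\prec$-larger position $H$ must be nonzero. The empty sequence $\left(\cdot\right)$ corresponding to $0$ is the $\prec$-least element, since the order prioritizes length; hence any position admitting a predecessor---in particular $H$, which is preceded by $G$---cannot be $0$. Lemma~\ref{lem:part ineq} therefore applies to $H$ and asserts that $H$ is not equivalent to any position preceding it. But $G\prec H$, and by the symmetry of equivalence $H=G$, so $H$ is in fact equivalent to the preceding position $G$. This is the desired contradiction, and we conclude $G\cong H$.

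I do not anticipate a serious obstacle, as essentially all of the content has been front-loaded into Lemma~\ref{lem:part ineq}. The only point requiring any care is the treatment of the zero position: one must confirm that the $\prec$-larger of two distinct \textsc{Nim} positions is automatically nonzero, so that Lemma~\ref{lem:part ineq} genuinely applies. This is settled by the remark that $0$ is the minimum element of the order, which is why the argument can dispense with a separate case analysis for positions equivalent to $0$.
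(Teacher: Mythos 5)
Your proposal is correct and matches the paper's intent exactly: the paper derives this theorem immediately from Lemma~\ref{lem:part ineq} and the fact that $\prec$ totally orders the \textsc{Nim} positions up to isomorphism, which is precisely your argument with the details (the WLOG step and the observation that the $\prec$-larger position is automatically nonzero since $\left(\cdot\right)$ is the $\prec$-minimum) written out. No gaps.
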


\section{Impartial Equivalence Classes}

\subsection{Paring Down the Nim Positions}

We will use the same well order on \textsc{Nim} positions as in Definition \ref{orderdef}.
However, in contrast to the partizan context, there are non-isomorphic positions that are known to be impartially equivalent. 
\begin{theorem}
\label{thm: adding 1}$n+1\equiv \left(n\oplus1\right)$, where $n\oplus1$ is a single heap of size $n+1$ if $n$ is even, and size $n-1$ if $n$ is odd.\end{theorem}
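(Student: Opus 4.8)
The plan is to prove $n+1 \equiv (n \oplus 1)$ by induction on $n$, using the recursive impartial equivalence test of Theorem~\ref{thm:impartial test}. Since $n+1$ is the Nim position consisting of a heap of size $n$ together with a heap of size $1$, and $n \oplus 1$ is a single heap of the stated size, I would set $G = n+1$ and $H = (n \oplus 1)$ and verify conditions (i), (ii), and (iii) of the test. The statement itself splits naturally into two cases according to the parity of $n$, since the target single-heap size differs ($n+1$ when $n$ is even, $n-1$ when $n$ is odd).

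**First I would** enumerate the options of both games explicitly. The options of $G = n+1$ come in two families: either play in the size-$1$ heap (removing it) to reach $n$, or play in the size-$n$ heap to reach $k+1$ for some $0 \le k < n$. The options of $H = (n\oplus 1)$ are the single heaps $m$ for $0 \le m < n\oplus 1$. The key engine is the induction hypothesis, which lets me rewrite any option of the form $k+1$ as the single heap $k \oplus 1$; this collapses the two-heap options of $G$ into single heaps and makes them directly comparable to the options of $H$. The outcome condition (iii) is immediate since neither $G$ nor $H$ is isomorphic to $0$ (both are nonzero Nim positions, having outcome $\mathcal{P}$ under misère play for a single nonzero heap, or the appropriate outcome for the two-heap sum — in any case not $\mathcal{N} = o^-$ of the empty position, and neither is $\cong 0$).

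**The heart of the argument** is checking conditions (i) and (ii), which require that for every option on one side there is a matching equivalent option (or a second-level option) on the other. Using the induction hypothesis to normalize all options to single heaps, the matching reduces to a combinatorial bookkeeping claim about which single-heap sizes appear among the reachable positions: I expect that the set of single heaps reachable from $G$ (after applying the inductive identification) coincides, up to the equivalences already established, with the set reachable from $H$, together with the extra option $n$ of $G$ and the top options near $n \oplus 1$ of $H$ that must be handled by the ``second option'' clause. The delicate point is the boundary behavior governed by the parity flip in $n \oplus 1$: when $n$ is odd the target heap is \emph{smaller} than $n$, so I must confirm that the option $G \to n$ is accounted for by a suitable $H''$ or $G'$, and symmetrically that the largest options of $H$ are matched.

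**The main obstacle** I anticipate is exactly this parity-driven boundary matching in conditions (i) and (ii): the clean bijection between options holds in the ``interior,'' but the endpoints—the bare option $n$ of $G$ and the near-maximal options of $H$—require care, and one must check both subcases (the direct match $G' = H'$ and the fallback to a grandchild $G''$ or $H''$) rather than assuming a single uniform matching. Verifying that no option of one game is linked to the whole of the other (equivalently, that the matching always succeeds) is where the parity case split does real work; everything else is routine application of the induction hypothesis and the option enumeration.
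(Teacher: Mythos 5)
Your route is genuinely different from the paper's. The paper disposes of this theorem by citation: it is the Mis\`{e}re Nim Rule from Ch.~13 of \cite{wwvol2}, obtained by a short induction from the Mis\`{e}re Mex Rule (Theorem V.1.5 of \cite{cgt}). You instead propose a self-contained induction on $n$ through the recursive test of Theorem~\ref{thm:impartial test}, and that does work: the induction hypothesis collapses every two-heap option $k+1$ of $G=n+1$ to the single heap $k\oplus 1$, so up to equivalence the options of $G$ are $\left\{0,1,\ldots,n\right\}$ when $n$ is even (matching the options of the heap $n+1$ exactly, so both conditions (i) and (ii) hold by direct matching), and $\left\{0,1,\ldots,n-2\right\}\cup\left\{n\right\}$ when $n$ is odd, where the options of $H=n-1$ are $\left\{0,\ldots,n-2\right\}$. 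In the odd case the two options of $G$ equivalent to $n$ (the bare heap $n$ and the position $(n-1)+1$) are unmatched among the options of $H$, but each has $n-1\cong H$ as an option, so the grandchild clause of condition (ii) closes the gap --- exactly the boundary issue you flagged. Your argument is longer than the paper's citation but exercises the same machinery used throughout the rest of the paper, which is a reasonable trade.

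One concrete error: your claim that condition (iii) is immediate because neither $G$ nor $H$ is isomorphic to $0$ fails at $n=1$, where $H=1\oplus 1$ \emph{is} the empty position. There condition (iii) is not vacuous; you must verify $o^{-}\left(1+1\right)=\mathcal{N}$, which does hold (the first player removes one heap, leaving a single stone for the opponent). The parenthetical justification is also off --- a single heap of size $m\ge 2$ has mis\`{e}re outcome $\mathcal{N}$, not $\mathcal{P}$ --- but only the $n=1$ base case actually needs repair, and the repair is one line.
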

\begin{proof}
This is essentially the Mis\`{e}re Nim Rule from Ch. 13 of \cite{wwvol2}, which follows via a straightforward induction argument from the Mis\`{e}re Mex Rule (Theorem V.1.5 in \cite{cgt}).
\end{proof}
\begin{defn}
\label{def:red form}
The \textit{reduced form} of a given \textsc{Nim} position is obtained by
performing the following steps: \end{defn}
\begin{itemize}
\item[(i)] If there are at least two odd-sized heaps, replace the lowest pair
of them with heaps of size one less. Repeat this until there is at
most one odd-sized heap.
\item[(ii)] If there is an odd-sized heap, and it is not the largest heap, replace
the odd-sized heap with a heap of size one less and replace one of
the largest heaps with a heap of size one greater.
\item[(iii)] Delete all empty heaps.\end{itemize}
\begin{corollary}
\label{cor: red. equi.}The reduced form of a \textsc{Nim} position is impartially equivalent to the
original. \end{corollary}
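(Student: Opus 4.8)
The plan is to show that each of the three transformations (i), (ii), (iii) in Definition \ref{def:red form} preserves impartial equivalence, and then conclude by transitivity that the fully reduced position is equivalent to the original. The only substantive tool needed is Theorem \ref{thm: adding 1}, together with the fact that $\equiv$ is a congruence for disjunctive sum: if $G\equiv H$ then $G+K\equiv H+K$ for every impartial $K$. This latter fact is immediate from the definition of $\equiv$, since for impartial $K$ and $X$ the sum $K+X$ is again impartial, whence $o^{-}\left((G+K)+X\right)=o^{-}\left(G+(K+X)\right)=o^{-}\left(H+(K+X)\right)=o^{-}\left((H+K)+X\right)$.

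Before handling the steps, I would record two consequences of Theorem \ref{thm: adding 1}. First, taking $n=1$ (odd) gives $1+1\equiv0$, the familiar fact that a pair of size-$1$ heaps is impartially negligible. Second, taking $n$ to be any even number gives $n+1\equiv\left(n+1\right)$ as a single heap; rewritten, this says that an odd heap of size $a$ satisfies $a\equiv\left(a-1\right)+1$, since $a-1$ is then even.

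For step (i), suppose $a$ and $b$ are both odd. As $a-1$ and $b-1$ are even, the second consequence gives $a\equiv\left(a-1\right)+1$ and $b\equiv\left(b-1\right)+1$. Adding these via congruence and then cancelling the resulting $1+1\equiv0$ yields $a+b\equiv\left(a-1\right)+\left(b-1\right)$, so replacing a pair of odd heaps by heaps one smaller is an equivalence. For step (ii), note that after step (i) there is at most one odd heap; if it has size $c$ and is not the largest, then every larger heap, in particular the largest (of size $d$), is even. Using $c\equiv\left(c-1\right)+1$ and, because $d$ is even, $d+1\equiv\left(d+1\right)$ as a single heap, congruence gives $c+d\equiv\left(c-1\right)+1+d\equiv\left(c-1\right)+\left(d+1\right)$, which is exactly the transformation of step (ii). Step (iii) only removes heaps of size $0$, producing an isomorphic and hence equivalent position since $G+0\cong G$.

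The argument is essentially bookkeeping once Theorem \ref{thm: adding 1} is in hand; the one point requiring care is step (ii), where the applicability of the theorem to the largest heap hinges on the observation that the unique surviving odd heap is \emph{not} the largest, forcing the largest heap to have even size. I expect this parity tracking to be the only place where a careless application of Theorem \ref{thm: adding 1} (using the wrong branch of $n\oplus1$) could go wrong.
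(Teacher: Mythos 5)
Your proposal is correct and follows essentially the same route as the paper: both arguments reduce steps (i) and (ii) to Theorem \ref{thm: adding 1} together with the congruence of $\equiv$ under disjunctive sum (the paper writes the single chain $a+b\cong a+b+0\equiv a+b+1+1\cong(a+1)+(b+1)\equiv(a\oplus1)+(b\oplus1)$, which covers both replacement rules at once, while you run the same computation in the other direction by extracting a heap of size $1$ from each odd heap). Your explicit verification of the congruence property and the parity bookkeeping in step (ii) are both correct and merely make explicit what the paper leaves implicit.
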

\begin{proof}
The first two replacement rules preserve the impartial equivalence
class since by Theorem \ref{thm: adding 1}, we have $a+b\cong a+b+0\equiv a+b+1+1\cong\left(a+1\right)+\left(b+1\right)\equiv \left(a\oplus1\right)+\left(b\oplus1\right)$.
Also, note that $0\cong\left(\cdot\right)$ and $a+0\cong a$, so
that empty heaps can be discarded.\end{proof}
\begin{lemma}
\label{lem: red. prec.}The reduced form of a \textsc{Nim} position $G$ either
is isomorphic to $G$ or precedes $G$.\end{lemma}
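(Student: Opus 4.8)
The plan is to treat the three replacement steps of Definition~\ref{def:red form} one at a time, showing that each step sends a \textsc{Nim} position to one that is either isomorphic to it or precedes it in quasi-lexicographic order, and then to compose these facts. Since $\prec$ is transitive and ``precedes-or-is-isomorphic'' is therefore also transitive, it suffices to check the three steps individually: if any step strictly changes the position the reduced form will precede $G$, and otherwise it will be isomorphic to $G$.

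The basic tool for the lexicographic comparisons is the following observation about quasi-lexicographic order. First, length takes priority, so any step that causes a heap to become empty (and hence be deleted in step (iii)) shortens the sequence and therefore precedes its input automatically. Second, for steps that preserve the number of heaps, I would use the principle that if one decreases one or more heap sizes and re-sorts, then every order statistic weakly decreases: for any threshold $t$, decreasing elements can only reduce the number of heaps exceeding $t$. If in addition the multiset genuinely changes (which it does whenever the total number of stones drops), the first coordinate at which the new and old sorted sequences differ must be a strict decrease, so the new sequence precedes the old one lexicographically. Step (i) is immediate from this, since replacing the lowest pair of odd heaps by heaps one smaller is a pure decrease of the multiset; step (iii) is immediate because deleting empty heaps yields an isomorphic \textsc{Nim} position (using $a+0\cong a$).

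The step that requires genuine care is (ii), and I expect it to be the main obstacle, because it simultaneously decreases one heap (the odd heap $o$, replaced by $o-1$) and increases another (a largest heap $L$, replaced by $L+1$); this is not a monotone decrease, so the order-statistic argument does not apply directly. The key structural fact I would exploit is that step (ii) runs only after step (i), so at that point there is at most one odd heap, and the hypothesis that this heap is not the largest forces $o<L$ with $L$ the (necessarily even) maximum. I would then argue that the heaps of size strictly below $o$ are untouched, that the new heap of size $o-1$ slots in immediately before the first old heap of size $o$ (since $o-1$ is at least as large as every heap below $o$), and that the change from $L$ to $L+1$ affects only the top of the sequence. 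Consequently the first index at which the new and old sorted sequences differ is exactly that of the former heap of size $o$, where the new value $o-1$ is strictly smaller, giving $\prec$; and if $o=1$ the decreased heap vanishes and the sequence is instead merely shortened.

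Finally I would assemble the pieces: applying step (i), then step (ii), then step (iii) produces a chain of positions each of which is isomorphic to or precedes its predecessor, so by transitivity the reduced form is isomorphic to or precedes $G$, with strict precedence exactly when at least one replacement was actually carried out.
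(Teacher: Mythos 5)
Your proposal is correct and follows essentially the same route as the paper: analyze the three replacement steps of Definition~\ref{def:red form} one at a time and compose via transitivity of ``isomorphic-or-preceding,'' with the careful case analysis concentrated on step (ii) (length drops when the odd heap has size $1$, otherwise a non-maximum entry strictly decreases at the first point of difference). The only cosmetic divergence is at step (i), where the paper simply observes that the result is an option of an option and invokes Lemma~\ref{lem:options prec}, while you reprove the needed comparison directly via the order-statistic argument; both are valid.
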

\begin{proof}
The first replacement rule replaces a pair of odd heaps with heaps
of size one smaller. This is an option of an option of the original
position, so it would precede the original by Lemma \ref{lem:options prec}. 

The second replacement rule preserves the sum but moves one object
to the largest heap. If the only odd heap had size one, then this
decreases the length, otherwise it keeps the length the same but decreases
a non-maximum entry. In either case, the new position after the replacement precedes the position before the replacement. 

The last replacement rule does not affect the length of the sequences
we identify \textsc{Nim} positions with.\end{proof}
\begin{corollary}
\label{cor:red.opt.prec.}The reduced forms of all options of a \textsc{Nim} position $G$ precede $G$.\end{corollary}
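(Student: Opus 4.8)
The plan is to obtain this immediately by composing the two preceding lemmas. Let $G'$ be an arbitrary option of $G$; since the claim quantifies over all options, it suffices to treat this single $G'$ and show its reduced form precedes $G$. By Lemma \ref{lem:options prec}, every option satisfies $G'\prec G$, so the only additional ingredient I need is to relate the reduced form of $G'$ to $G'$ itself and then chain the two facts through the order $\prec$.

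First I would apply Lemma \ref{lem: red. prec.} to the position $G'$: its reduced form either is isomorphic to $G'$ or precedes $G'$. I then split into these two cases. If the reduced form of $G'$ precedes $G'$, then since $\prec$ is a well-order (in particular transitive) and $G'\prec G$, the reduced form precedes $G$. If instead the reduced form of $G'$ is isomorphic to $G'$, then, because we have identified \textsc{Nim} positions up to isomorphism and defined $\prec$ on the associated nondecreasing sequences, the reduced form sits at exactly the same place in the well-order as $G'$, and so it precedes $G$ precisely because $G'$ does.

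As the two cases are exhaustive and $G'$ was arbitrary, the reduced form of every option of $G$ precedes $G$. The only point that wants a word of care—and it is the closest thing to an obstacle, though a minor one—is the isomorphic case: I must lean on the convention established at the start of Section 5, namely that $\prec$ is a well-order on \textsc{Nim} positions \emph{up to isomorphism}, so that "isomorphic to $G'$" may be absorbed into the relation "$\preceq G'$'' before composing with $G'\prec G$. Beyond this bookkeeping, the corollary is a direct consequence of Lemmas \ref{lem:options prec} and \ref{lem: red. prec.}.
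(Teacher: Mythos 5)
Your proposal is correct and takes exactly the same route as the paper, which simply cites Lemma \ref{lem:options prec} and Lemma \ref{lem: red. prec.} and leaves the transitivity/isomorphism bookkeeping implicit. Your more explicit case split on the conclusion of Lemma \ref{lem: red. prec.} is a faithful elaboration of that one-line argument.
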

\begin{proof}
This follows directly from Lemma \ref{lem:options prec} and Lemma \ref{lem: red. prec.}.\end{proof}
\begin{lemma}
\label{lem:least red.}Given a nonzero reduced-form \textsc{Nim} position $\left(a_{1},a_{2},\ldots,a_{N}\right)$,
the $\prec$-least reduced forms of the options are isomorphic to
$\left(a_{1},\ldots,a_{N-1}\right)$.\end{lemma}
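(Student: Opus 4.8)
The plan is to pair an achievability observation with a lower bound on the lengths of the reduced forms that options can have. Write $\widetilde{O}$ for the reduced form of an option $O$. First I would check that $(a_1,\ldots,a_{N-1})$ is itself in reduced form: since $G$ is reduced it has at most one odd heap, and such a heap must be the maximum $a_N$, so deleting $a_N$ leaves a position all of whose heaps are even and positive, which is already in reduced form. Because $(a_1,\ldots,a_{N-1})$ is the option obtained by removing the largest heap, it occurs as some $\widetilde{O}$. As quasi-lexicographic order ranks shorter sequences first, it then suffices to prove two things: (a) no option has a reduced form of length less than $N-1$, and (b) every option $O$ whose reduced form $\widetilde{O}$ has length $N-1$ satisfies $\widetilde{O}\succeq(a_1,\ldots,a_{N-1})$.

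For (a) I would separate the options that delete a heap from those that merely shrink one. Deleting a heap from the reduced position $G$ produces a position of length $N-1$ that is again reduced, since removing a heap cannot increase the number of odd heaps and leaves the odd heap, if any, as the maximum; such options are therefore their own reduced forms and have length exactly $N-1$. The delicate case is an option $O$ of length $N$ obtained by shrinking one heap to a positive size, where I must argue that reduction deletes at most one heap. Reduction deletes a heap only when a size-$1$ odd heap is decremented to $0$ by rule (i) or (ii), and since $O$ differs from $G$ in a single heap while $G$ has at most one odd heap, $O$ has at most two odd heaps. The crux is that $O$ contains at most one size-$1$ odd heap: for $N\ge2$ the unique odd heap of $G$ is its maximum $a_N$, which exceeds the positive even heap $a_{N-1}$ and so has size at least $3$, whence any size-$1$ odd heap of $O$ can only be the freshly shrunk heap. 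Thus reduction removes at most one heap and $\widetilde{O}$ has length at least $N-1$; the degenerate cases $N=1$ and $G=(1)$ I would dispatch by hand.

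For (b), a shrinking move can give $\widetilde{O}$ length $N-1$ only by creating a size-$1$ heap that is then deleted, so the shrunk heap has size $1$. Tracing rule (i) or (ii) through the three ways this can happen, I would show that $\widetilde{O}$ arises from the sequence $(a_1,\ldots,a_{N-1})$ either by incrementing its largest entry (when the maximum of $G$ was the heap shrunk) or by deleting one entry and appending a value at least $a_{N-1}$, namely $a_N+1$ or $a_N-1$ (when a smaller heap was shrunk). In either form a direct entrywise comparison of the two sorted sequences gives $\widetilde{O}\succeq(a_1,\ldots,a_{N-1})$: they agree up to the deleted position, and beyond it every entry of $\widetilde{O}$ dominates the corresponding entry of $(a_1,\ldots,a_{N-1})$, the final entry being dominated by the appended maximum. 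I expect the main obstacle to be precisely this bookkeeping --- tracking how many odd heaps $O$ has, which of rules (i) and (ii) fires, and exactly which heap vanishes --- rather than any single hard inequality; the governing idea, that shrinking a heap can never undercut the outright removal of the largest heap, is what makes each comparison fall out correctly.
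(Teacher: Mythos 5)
Your proposal is correct and follows essentially the same route as the paper's proof: check that whole-heap removals of a reduced position are already reduced, observe that a shrinking move can only lose a heap under reduction by creating a size-$1$ heap, and then compute the three possible reduced forms ($a_N$ shrunk to $1$, versus a smaller heap shrunk to $1$ with $a_N$ even or odd) and compare each entrywise to $\left(a_1,\ldots,a_{N-1}\right)$. The one point to make explicit in your step (b) is that the whole-heap-removal options, which are their own reduced forms of length $N-1$, also satisfy $\widetilde{O}\succeq\left(a_1,\ldots,a_{N-1}\right)$; this is exactly Lemma~\ref{lem:least option}, which the paper invokes at the corresponding spot.
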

\begin{proof}
First, we check that removing an entire heap leaves a position
in reduced form. If the original has no odd heaps, then neither would
an option with a heap removed. If the original has an odd heap, then
it must be the unique largest heap, $a_N$. In that case, removing a heap
either removes $a_{N}$ leaving no odd heaps or removes a smaller
heap so that the largest heap is still the only odd one.

By Lemma \ref{lem:least option}, $\left(a_{1},\ldots,a_{N-1}\right)$ precedes
all non-isomorphic options, so it remains to show that options which
do not involve removing an entire heap cannot have reduced forms preceding
$\left(a_{1},\ldots,a_{N-1}\right)$. Let $H$ be an option not obtained by removing an entire heap. Since $H$ has $N$ heaps, the only way $\left(a_{1},\ldots,a_{N-1}\right)$ would not precede the reduced form of $H$ is if the act of putting $H$ in reduced form decreases the number of heaps. The only way \emph{that} can happen is if $H$ contains a heap of size $1$.

If $N=1$, then $\left(a_{1},\ldots,a_{N-1}\right)\cong\left(\cdot\right)$,
which precedes all other options. If $N>1$, then since the original
position is in reduced form, we have $a_{N}>1$. If a heap of size
$a_{N}$ is replaced with $1$ by moving to $H$, the reduced form is $\left(a_{1},a_{2},\ldots,a_{N-2},a_{N-1}+1\right)$, which is preceded by $\left(a_{1},a_{2},\ldots,a_{N-2},a_{N-1}\right)$.
Otherwise, some other heap $a_i$ is replaced with $1$ by moving to $H$. If $a_{N}$ was even, then the reduced form is
$\left(a_{1},\ldots,a_{i-1},a_{i+1},\ldots,a_{N-1},a_{N}+1\right)$
which has length $N-1$ but is either preceded by or isomorphic to
$\left(a_{1},\ldots,a_{N-1}\right)$ since $a_{i+1}\ge a_{i}$. If
$a_{N}$ was odd, then the
reduced form is $\left(a_{1},\ldots,a_{i-1},a_{i+1},\ldots,a_{N-1},a_{N}-1\right)$.
Since $a_{N}-1\ge a_{N-1}$, this is preceded by or isomorphic to
$\left(a_{1},\ldots,a_{N-1}\right)$.
\end{proof}

\subsection{Finding Equivalence Classes}
\begin{lemma}
Nonzero reduced-form \textsc{Nim} positions are not impartially equivalent to any preceding
reduced-form position.\end{lemma}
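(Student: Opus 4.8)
The plan is to run the induction of Lemma \ref{lem:part ineq} again, but over the reduced-form positions and with the impartial recursive test (Theorem \ref{thm:impartial test}) in place of its partizan analogue. I fix a nonzero reduced-form position $G\cong\left(a_1,\ldots,a_N\right)$ and assume the statement for every reduced-form position preceding $G$. The useful consequence of this hypothesis is that any two \emph{distinct} reduced-form positions both preceding $G$ are impartially inequivalent: the $\prec$-larger of the two is nonzero and would otherwise be equivalent to a preceding reduced-form position. Given a reduced-form $H\prec G$, say $H\cong\left(b_1,\ldots,b_M\right)$, I would split into three cases, according to whether $H\cong0$, or $\left(b_1,\ldots,b_{M-1}\right)\cong\left(a_1,\ldots,a_{N-1}\right)$, or $\left(b_1,\ldots,b_{M-1}\right)\prec\left(a_1,\ldots,a_{N-1}\right)$ (the ordering forces one of these whenever $H\ne0$).

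First, exactly as in the partizan proof, the cancellation property of Nim-sum together with the mis\`ere \textsc{Nim} strategy shows that $G$ shares the outcome of none of its options against arbitrary \textsc{Nim} positions, so $G$ is impartially inequivalent to each of its options. This disposes of the case $\left(b_1,\ldots,b_{M-1}\right)\cong\left(a_1,\ldots,a_{N-1}\right)$: here $M=N$, the last entry of $H$ is at least $a_{N-1}$ and smaller than $a_N$, so $H$ is obtained from $G$ by shrinking the top heap and is therefore an option of $G$.

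Second, when $H$ is nonzero with $\left(b_1,\ldots,b_{M-1}\right)\prec\left(a_1,\ldots,a_{N-1}\right)$, I would assume $G\equiv H$ and extract a contradiction from condition (i) of Theorem \ref{thm:impartial test} applied to the option $H'=\left(b_1,\ldots,b_{M-1}\right)$ of $H$. Note that $H'$ is again in reduced form, since deleting the top heap of a reduced form leaves a reduced form (as checked inside the proof of Lemma \ref{lem:least red.}). By Lemma \ref{lem:least red.} every option $G'$ of $G$ has reduced form $\succeq\left(a_1,\ldots,a_{N-1}\right)\succ H'$; this reduced form precedes $G$ (Corollary \ref{cor:red.opt.prec.}) and is equivalent to $G'$ (Corollary \ref{cor: red. equi.}), so the induction hypothesis gives $G'\not\equiv H'$. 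Condition (i) then forces an option $H''$ of $H'$ with $H''\equiv G\equiv H$; replacing $H''$ by its reduced form, which precedes $H$ by Lemma \ref{lem: red. prec.}, contradicts the induction hypothesis applied to the nonzero reduced form $H$.

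The main obstacle is the remaining case $H\cong0$, which has no counterpart difficulty in the partizan argument: there condition (iii) forced $G\cong0$ outright, whereas the impartial condition (iii) only forces $o^{-}\left(G\right)=\mathcal{N}$, and condition (i) is vacuous when $H$ has no options. I would instead exhibit a distinguishing game directly from the mis\`ere \textsc{Nim} outcome rule. If $o^{-}\left(G\right)=\mathcal{P}$, then already $o^{-}\left(G+0\right)=\mathcal{P}\neq\mathcal{N}=o^{-}\left(0\right)$, so $X=0$ separates $G$ from $0$. If instead $o^{-}\left(G\right)=\mathcal{N}$, then $G$ must contain a heap of size at least $2$, since the only nonzero reduced form all of whose heaps have size at most $1$ is the single heap $1$, whose outcome is $\mathcal{P}$; here I would take $X=G$, because $G+G$ has Nim-value $0$ while still containing a heap of size at least $2$, whence its mis\`ere outcome is $\mathcal{P}$, against $o^{-}\left(0+G\right)=\mathcal{N}$. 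Either way $G\not\equiv0$, completing the induction.
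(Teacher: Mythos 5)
Your proof is correct, and for most of the argument it follows the paper's own route: the same induction over $\prec$, the same use of the Nim-sum cancellation property to dispose of positions sharing the prefix $\left(a_{1},\ldots,a_{N-1}\right)$, and the same application of condition (i) of Theorem \ref{thm:impartial test} to the option $\left(b_{1},\ldots,b_{M-1}\right)$ of $H$, powered by Lemma \ref{lem:least red.} and Corollary \ref{cor:red.opt.prec.}. The one genuine divergence is the subcase $G\not\equiv0$. The paper stays inside the recursive test: condition (ii) forces every option of $G$ to have an option equivalent to $0$, the induction hypothesis turns that into a structural constraint, and the argument pins $G$ down to $\left(1,1\right)$, which is not reduced. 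You instead exhibit an explicit distinguishing position via the mis\`{e}re \textsc{Nim} outcome rule: $X=0$ when $o^{-}\left(G\right)=\mathcal{P}$, and $X=G$ when $o^{-}\left(G\right)=\mathcal{N}$, using the observation that the only nonzero reduced form with all heaps of size at most $1$ is the single heap $1$ (whose outcome is $\mathcal{P}$), so that $G+G$ is a position with a heap of size at least $2$ and Nim-value $0$, hence outcome $\mathcal{P}$, while $o^{-}\left(0+G\right)=\mathcal{N}$. Your version is more self-contained at that step --- it needs only the outcome rule (Theorem V.1.1 of \cite{cgt}), which the paper already invokes in Lemma \ref{lem:part ineq} --- and it sidesteps the bookkeeping about which second-level options of $G$ can be equivalent to $0$; the paper's version has the virtue of never leaving the recursive-test framework. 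Both are valid, and your minor citation of Lemma \ref{lem: red. prec.} where Corollary \ref{cor:red.opt.prec.} is the cleaner reference is cosmetic.
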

\begin{proof}
We use induction. Let $G$ be a nonzero \textsc{Nim} position, with $G\cong\left(a_{1},\ldots,a_{N}\right)$. Assume that the claim is true for all reduced-form positions preceding $G$. By the same argument as in the proof of Lemma \ref{lem:part ineq}, $G$ is not impartially equivalent to any position beginning with $\left(a_{1},\ldots,a_{N-1}\right)$.

Let $H$, a nonzero \textsc{Nim} position in reduced form and preceding $G$, be given. Denote the sequence corresponding to $H$ by $\left(b_{1},\ldots,b_{M}\right)$, and suppose $\left(b_{1},\ldots,b_{M-1}\right)\ncong\left(a_{1},\ldots,a_{N-1}\right)$. 

By the definition of the ordering, we must have $\left(b_{1},\ldots,b_{M-1}\right)\prec\left(a_{1},\ldots,a_{N-1}\right)$
(and these are in reduced form by Lemma \ref{lem:least red.}). Then note
that $\left(a_{1},\ldots,a_{N-1}\right)$ is the $\prec$-least reduced-form
option of $G$ (by Lemma \ref{lem:least red.}) and no reduced form of an
option of $G$ could be impartially equivalent to $\left(b_{1},\ldots,b_{M-1}\right)$
by the induction hypothesis and Corollary \ref{cor:red.opt.prec.}. Assume, for sake of contradiction,
that $G\equiv H$. Then by condition (i) of Theorem \ref{thm:impartial test},
$G$ is impartially equivalent to an option of $\left(b_{1},\ldots,b_{M-1}\right)$.
But then, since impartial equivalence is transitive, $H$ would be
impartially equivalent to an option of $\left(b_{1},\ldots,b_{M-1}\right)$.
This is impossible by the induction hypothesis and Corollary \ref{cor:red.opt.prec.}.

It remains to show that $G\not\equiv 0$. Suppose, for sake of contradiction, that $G\equiv 0$. Condition (ii) of Theorem \ref{thm:impartial test} says that all of the options of the original position must have an option impartially equivalent to $0$. By the induction hypothesis, this can only happen if all of the options of the original position have $0$ itself as an option. As such, it must be that $N\le2$. The $N=1$ case is covered by the first paragraph, so it remains to check $N=2$ (as $G\ncong0$). For every option to have zero as an option, $a_{1}$ and $a_{2}$ must both be $1$ as otherwise there would be an option with two heaps, but $\left(1,1\right)$ is not reduced.
\end{proof}
Since $\prec$ is a total order on the \textsc{Nim} positions (up to isomorphism),
and every \textsc{Nim} position is impartially equivalent to a reduced-form
one by Corollary \ref{cor: red. equi.}, we have:
\begin{theorem}
Impartially equivalent \textsc{Nim} positions have isomorphic reduced forms.
\end{theorem}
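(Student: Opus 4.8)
The plan is to reduce the statement to the lemma just proved, using the transitivity of impartial equivalence together with Corollary \ref{cor: red. equi.}. Suppose $G$ and $H$ are impartially equivalent \textsc{Nim} positions, and let $G_r$ and $H_r$ denote their respective reduced forms. By Corollary \ref{cor: red. equi.} we have $G\equiv G_r$ and $H\equiv H_r$, so chaining these with the hypothesis $G\equiv H$ and using that $\equiv$ is symmetric and transitive yields $G_r\equiv H_r$. Thus it suffices to show that two impartially equivalent reduced-form positions must be isomorphic.

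First I would handle this under the assumption that both reduced forms are nonzero. Since $\prec$ is a total order on \textsc{Nim} positions up to isomorphism, the two reduced forms are either isomorphic or one strictly precedes the other; say, without loss of generality, $G_r\prec H_r$. Then $H_r$ is a nonzero reduced-form position that is impartially equivalent to the strictly preceding reduced-form position $G_r$, directly contradicting the preceding lemma. Hence $G_r\cong H_r$.

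It remains to dispose of the case where one reduced form, say $G_r$, is isomorphic to $0$. Here I would exploit that the empty position $\left(\cdot\right)$ is both in reduced form and the $\prec$-least \textsc{Nim} position, so nothing precedes it. If $H_r$ were nonzero, then $H_r$ would be a nonzero reduced-form position impartially equivalent to the strictly preceding reduced form $0\cong\left(\cdot\right)$, once again contradicting the preceding lemma; therefore $H_r\cong 0\cong G_r$, and the two reduced forms agree in this case as well.

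I expect no genuine obstacle in this final deduction: essentially all of the work has already been invested in the preceding lemma and in establishing that reduction preserves the impartial equivalence class (Corollary \ref{cor: red. equi.}). The only point requiring a moment's care is the zero case, where one must observe that $\left(\cdot\right)$ is the minimum of the well-order, so that the preceding lemma applies with $0$ playing the role of the ``preceding'' reduced-form position.
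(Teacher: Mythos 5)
Your proposal is correct and follows exactly the route the paper intends: combine Corollary \ref{cor: red. equi.} with the totality of $\prec$ and the preceding lemma (whose final paragraph handles the comparison with $0$), which is precisely the one-sentence justification the paper gives before stating the theorem. Your write-up merely makes explicit the details the paper leaves implicit.
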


\section{Transfinite Games}
In this section we consider transfinite non-loopy games: those which may have infinitely many distinct subpositions, but which still admit no infinite runs. In this setting, \textsc{Transfinite Nim} allows heap sizes to be arbitrary ordinals, although the number of heaps is still finite.

\begin{theorem}All theorems above apply in the (non-loopy) transfinite setting.\end{theorem}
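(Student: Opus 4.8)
The plan is to isolate every place where the shortness hypothesis was actually used and to check that each such use survives the passage to non-loopy transfinite games. There are exactly two such places. First, every argument in Sections 4 and 5 is an induction whose validity rests on the claim that quasi-lexicographic order is a well-order on the sequences we use to encode positions. Second, those arguments import two external ingredients: the recursive tests of Theorems \ref{thm:impartial test} and \ref{thm:partizantest}, together with the linked and downlinked characterizations of \cite{cgt}, and the Mis\`{e}re Nim Rule of Theorem \ref{thm: adding 1}. It suffices to re-establish these in the transfinite setting; the intervening lemmas are then unaffected.

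First I would verify that shortlex order remains well-founded --- indeed a well-order in the sense that every nonempty subclass of sequences has a $\prec$-least member --- when the entries are arbitrary ordinals rather than natural numbers. For each fixed length $n$, lexicographic order on $n$-tuples of ordinals has this property, being the finite lexicographic product of copies of the well-ordered class of ordinals: given a nonempty family of sequences, one selects the least length occurring and then the lexicographically least sequence of that length. This is precisely the ingredient the inductions invoke, so Lemmas \ref{lem:options prec}, \ref{lem:least option}, \ref{lem: red. prec.}, and \ref{lem:least red.} hold verbatim, each being a purely order-theoretic statement about nondecreasing finite sequences over a linear order. The only arithmetic used --- ``one less'', ``one greater'', and parity --- carries over once one declares an ordinal $\lambda+k$, with $\lambda$ a limit or $0$ and $k$ finite, to be even or odd according to the parity of $k$, and interprets $\oplus 1$ as altering $k$ by one; the reduced-form procedure still terminates because the number of heaps stays finite. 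I note in passing that this same well-order shows \textsc{Transfinite Nim} is genuinely non-loopy: by the extended Lemma \ref{lem:options prec} every move strictly lowers the position in a well-order, so no infinite run is possible.

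Next I would confirm the imported ingredients. Outcomes are well-defined on any non-loopy game, since its position order is well-founded, and a disjunctive sum of non-loopy games is again non-loopy, because an infinite run of $G+X$ would force an infinite run in one summand. The characterizations of linked and downlinked pairs and the recursive tests derived from them are proved in \cite{cgt} by induction on game trees, and that induction is an instance of well-founded (Noetherian) induction rather than induction on a finite birthday; it therefore remains valid for non-loopy games, whose option sets are still sets even when infinite. The Mis\`{e}re Nim Rule likewise follows from the Mis\`{e}re Mex Rule by transfinite induction on the ordinal heap sizes, the mex of a set of ordinals being the least ordinal not in that set, so Theorem \ref{thm: adding 1} and hence Corollary \ref{cor: red. equi.} persist.

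The step I expect to be the main obstacle is this last one: certifying that the general machinery of \cite{cgt} is insensitive to infiniteness. Two points require care. A heap of transfinite size has infinitely many options --- a heap of size $\omega$, for instance, may be moved to any finite heap --- so one must confirm that the recursive tests, which are universally and existentially quantified over options, tolerate infinite branching; they do, since nothing in their statements presumes finite branching. More delicately, the test game $T$ witnessing a linkage or downlinkage now ranges over the larger class of non-loopy transfinite games, so one must check that enlarging this quantifier does not create spurious linkages that break the ``only if'' directions of the characterizations in \cite{cgt}. The thing to verify is that the witnessing $T$ constructed in those proofs may always be taken within the non-loopy class and of complexity bounded in terms of $G$ and $H$, so that the arguments close within the universe at hand. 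Granting this, every theorem above goes through unchanged.
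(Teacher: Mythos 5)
Your overall strategy---audit every induction and every imported theorem for dependence on shortness---is the same as the paper's, and most of what you check (transfinite induction on the shortlex well-order, the linked/downlinked characterizations, the Mis\`{e}re Mex Rule, finiteness of the number of heaps) matches the paper's own list. But your opening claim that there are \emph{exactly two} places where shortness enters is not correct, and the omission matters. The proof of Lemma \ref{lem:part ineq} (and its impartial counterpart) does not rest only on the recursive tests of Theorems \ref{thm:impartial test} and \ref{thm:partizantest} and on Theorem \ref{thm: adding 1}: its first paragraph invokes the mis\`{e}re \textsc{Nim} strategy and the cancellation property of Nim-sum (Theorem V.1.1 of \cite{cgt}) to conclude that $G$ never has the same outcome as any of its options in a sum with an arbitrary \textsc{Nim} position. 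For ordinal heap sizes this is a genuine extra ingredient: one must define the Nim-sum of ordinals via base-$2$ Cantor normal forms and check that the mis\`{e}re strategy, including the endgame correction when all heaps have size $\le 1$ (which survives because the number of heaps is finite), carries over. The paper devotes a paragraph to exactly this point, citing VIII.4 of \cite{cgt}; your audit misses it entirely, so the base case of the central inequivalence lemmas is unsupported in your version.

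A second, smaller defect: your treatment of the linked/downlinked machinery ends with ``Granting this, every theorem above goes through unchanged.'' You correctly identify the delicate point---that the witness $T$ now ranges over a larger class and that the ``only if'' directions must still close---but you leave it as a hypothesis rather than discharging it. The paper resolves it concretely: Theorems V.3.5 and V.6.15 of \cite{cgt} use no induction at all, and V.3.6 and V.6.16 induct on $T$ in a form (phrased via minimal birthday) that is already valid transfinitely. Without closing that loop, your proof is conditional precisely at the step you yourself flag as the main obstacle.
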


\begin{proof}
Although chapter V of \cite{cgt} assumes all games are short, the theorems we cite in this paper do not require that assumption. Theorems 3.5 and 6.15 (and the propositions they rely on) do not use any induction. Theorems 3.6 and 6.16 use induction on $T$, but do not require $T$ to be short: transfinite induction suffices. In fact, as the proof of Theorem 3.6 was phrased in terms of ``minimal birthday'', it does not require any editing for the transfinite case. Also, Theorem 1.5, the Mis\`{e}re Mex Rule, still applies since the induction in the proof of Theorem 1.4 may as well be transfinite.

As mentioned in VIII.4 of \cite{cgt}, the strategy for \textsc{Nim} under normal play works identically in the transfinite case (using base-$2$ Cantor Normal Forms for the heap sizes). As the mis\`{e}re play strategy for \textsc{Nim} parallels the normal play strategy so well, and the number of heaps is finite (for counting heaps of size $1$ at the end of a game), the mis\`{e}re play strategy for \textsc{Nim} works identically in the transfinite case as well. In particular, the induction in the proof of the mis\`{e}re play strategy as Theorem V.1.1 of \cite{cgt} may as well be transfinite.

The induction required to get from the Mis\`{e}re Mex Rule to our Theorem \ref{thm: adding 1} can be transfinite. Inductions based on the number of heaps (such as the implicit one in \ref{def:red form}) do not require modification because the number of heaps is still finite in \textsc{Transfinite Nim}. All other inductions in this paper, such as in the proof of \ref{lem:part ineq}, do not require the heaps to be finite.
\end{proof}

\section*{Acknowledgments}
I thank Gregory Puleo for his gracious help with editing, and Douglas West, for ``The Grammar According to West.'' I also thank Meghan Allen, Paul Ottaway, and Thane Plambeck for helpful personal correspondence.

\end{document}